\newtheorem{remark}{Remark} 
\newtheorem{lemma}{Lemma} 
\newtheorem{example}{Example}
\newtheorem{conjecture}{Conjecture} 
\newtheorem{main}{Theorem} 
\begin{document}
	
\title[The hybrid matching of Hurwitz systems]{The hybrid matching of Hurwitz systems}

\author[Luis Fernando Mello and Paulo Santana]{Luis Fernando Mello $^1$ and Paulo Santana$^2$}

\address{$^1$ Instituto de Matem\'{a}tica e Computa\c c\~ao, Universidade Federal de Itajub\'{a}, Avenida BPS 1303, Pinheirinho, CEP 37.500-903, Itajub\'{a}, MG, Brazil}
\email{lfmelo@unifei.edu.br}

\address{$^2$ IBILCE--UNESP, CEP 15054--000, S. J. Rio Preto, S\~ao Paulo, Brazil}
\email{paulo.santana@unesp.br}

\subjclass[2020]{34A38, 34D2 and 34A26.}

\keywords{Hybrid dynamical system; Global stability; Limit cycle.}

\begin{abstract}
	In this paper we study planar hybrid systems composed by two stable linear systems, defined by Hurwitz matrices, in addition with a jump that can be a piecewise linear, a polynomial or an analytic function. We provide an explicit analytic necessary and sufficient condition for this class of hybrid systems to be asymptotically stable. We also prove the existence of limit cycles in this class of hybrid systems. Our results can be seen as generalizations of results already obtained in the literature. This was possible due to an embedding of piecewise smooth vector fields in a hybrid structure.
\end{abstract}

\maketitle

\section{Introduction}

Consider a real autonomous system of differential equations
\begin{equation}\label{1}
	\dot x_1=P_1(x), \quad \dots,  \quad \dot x_n=P_n(x),
\end{equation}
with $x=(x_1,\dots,x_n)\in\mathbb{R}^n$ and $P_1,\dots, P_n\colon\mathbb{R}^n\to\mathbb{R}$ of class $C^1$, where the dot denotes the derivative with respect to the independent variable $t$ (time). As usual, we also identify system \eqref{1} with the vector field $X=(P_1,\dots,P_n)$. 

We say that $x_0\in\mathbb{R}^n$ is a \emph{singularity} of \eqref{1} if $X(x_0)=0$. We say that a singularity $x_0$ is \emph{locally asymptotically stable} if there is a neighborhood $U\subset\mathbb{R}^n$ of $x_0$ such that all orbits of \eqref{1} with initial points in $U$ tend to $x_0$ in forward time. The \emph{basin of attraction} of $x_0$ is the largest $U$ satisfying this condition. If the basin of attraction is the entire $\mathbb{R}^n$, then we say that $x_0$ is \emph{globally asymptotically stable} (GAS). After a translation if necessary, observe that we can assume that $x_0$ is the origin. If the origin is GAS, then by abuse of notation we say that $X$ is GAS.

The ability to determine the basin of attraction of a singularity is of great importance for applications of systems of ordinary differential equations (ODE). For example in \emph{evolutionary games} (e.g. a game that models a conflict between different
species of animals~\cite{SchSig} or the level of corruption of a democratic society~\cites{Acc1,Acc2}), an \emph{evolutionary stable strategy} (ESS) represents a ``uninvadable'' state of the population, in the sense that small deviants behavior will eventually disappear under natural selection~\cite{Fri1991}. Under the language of ODE an ESS is represented by a locally asymptotically stable singularity~\cites{HSS,Palm} and its basin of attraction determines \emph{how deviant} a behavior can be and yet be tamed by natural selection. In particular if an ESS is GAS, then this mean that natural selection will eliminate \emph{any} deviant behavior. For more information about ESS and its relation to ODE, we refer to \cites{ESS1,ESS2,ESS3} and the references therein.

Despite this importance, so far there is very few practical methods to determine if a given singularity is GAS. Among them one that stands out is the \emph{Markus-Yamabe condition}, related with the following conjecture.

\begin{conjecture}[Markus-Yamabe~\cite{MarYam1960}]
	Let $X=(P_1,\dots,P_n)$ be an autonomous $C^1$-vector field on $\mathbb{R}^n$ having a unique singularity at the origin. If the eigenvalues of the Jacobian matrix $DX(x)$ have negative real part for every $x\in\mathbb{R}^n$, then $X$ is GAS.
\end{conjecture}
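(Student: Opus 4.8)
The plan is to attack the statement through two classical pillars: first promoting $X$ to a global diffeomorphism of $\mathbb{R}^n$, and then exhibiting a Lyapunov function certifying that the origin attracts every orbit. The hypothesis that every eigenvalue of $DX(x)$ has negative real part at each $x$ forces $\det DX(x)\neq 0$ everywhere, since $0$ can never be such an eigenvalue; hence $DX(x)$ is nonsingular for all $x$ and, by the inverse function theorem, $X$ is a local diffeomorphism. First I would try to upgrade this to a global diffeomorphism via Hadamard's global inverse function theorem, which requires establishing that $X$ is \emph{proper}, i.e. that $\lvert X(x)\rvert\to\infty$ as $\lvert x\rvert\to\infty$. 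Granting properness, $X\colon\mathbb{R}^n\to\mathbb{R}^n$ would be a diffeomorphism, consistent with the origin being the unique singularity and providing clean coordinates in which to analyze the flow.

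The second pillar is a Lyapunov argument. The natural candidate is the energy $V(x)=\tfrac12\lvert X(x)\rvert^2$, whose derivative along orbits of $\dot x=X(x)$ is $\dot V=\langle X(x),DX(x)\,X(x)\rangle$. I would aim to show $\dot V<0$ for $x\neq 0$, which would follow immediately if the \emph{symmetric part} $\tfrac12\bigl(DX(x)+DX(x)^{\top}\bigr)$ were negative definite. The difficulty is that the spectral hypothesis controls the eigenvalues of $DX(x)$ itself, not those of its symmetrization: a matrix may have spectrum in the open left half-plane while its symmetric part is indefinite. So a naive quadratic Lyapunov function need not work, and one is pushed either to seek an $x$-dependent (adapted) inner product $\langle\cdot,\cdot\rangle_{P(x)}$ under which the field is infinitesimally contracting, or to argue dynamically rather than pointwise.

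The dynamical route passes through the variational equation along trajectories. Writing $\varphi_t$ for the flow and $y(t)=D\varphi_t(x)$ for the derivative cocycle, $y$ satisfies the \emph{non-autonomous} linear system $\dot y=DX(\varphi_t(x))\,y$. The plan would be to show this linear system is uniformly asymptotically stable, so that nearby orbits contract and, combined with the global-diffeomorphism structure, every orbit is funneled to the origin. The hard part---and, I expect, the decisive obstacle---is precisely here: pointwise negativity of the spectrum of $DX$ does \emph{not} by itself guarantee asymptotic stability of the time-varying system $\dot y=A(t)\,y$ with $A(t)=DX(\varphi_t(x))$, because the eigenframe of $A(t)$ rotates as $t$ varies and transient growth can accumulate faster than the frozen-coefficient eigenvalues would suggest. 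Bridging this gap between the frozen spectrum and the genuine non-autonomous flow is the crux of the whole matter.

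In low dimension this gap can be closed by topology: in the plane one exploits the Jordan curve theorem, Poincar\'e--Bendixson theory and the global-injectivity results available there to rule out orbits escaping to infinity or accumulating on a periodic set, so the variational subtlety is sidestepped. For general $n$ no such planar leverage exists, and the proof must instead supply a uniform contraction estimate for the cocycle $D\varphi_t$ directly from the spectral hypothesis---for instance by producing a globally defined, smoothly varying Riemannian metric $P(x)$ for which $DX(x)^{\top}P(x)+P(x)\,DX(x)$ is negative definite. Constructing such a metric out of purely local spectral data is the single step I would expect to absorb essentially all of the difficulty, and it is where any careful proposal must concentrate its effort.
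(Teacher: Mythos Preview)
The statement you are trying to prove is a \emph{conjecture}, and the paper does not prove it; it merely records its status. Immediately after stating the conjecture, the paper notes that it was established for $n=2$ independently by Fe{\ss}ler, Glutsyuk and Gutierrez, and that Cima et al.\ produced a polynomial counterexample for every $n\geqslant 3$. So for general $n$ the statement is \emph{false}, and no proof along the lines you sketch (or any other) can succeed.

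Your own diagnosis actually locates the failure point correctly. The step where you hope to pass from the frozen spectral condition on $DX(x)$ to genuine contraction of the non-autonomous variational system $\dot y = DX(\varphi_t(x))\,y$ is exactly the step that breaks: pointwise Hurwitz spectrum does not force a globally defined metric $P(x)$ with $DX(x)^{\top}P(x)+P(x)\,DX(x)$ negative definite, and the Cima et al.\ example shows that transient growth can indeed accumulate into a bona fide unbounded orbit. Likewise the properness claim needed for Hadamard's theorem is not free from the hypotheses and would require its own argument. In dimension two the obstruction is circumvented, as you anticipated, by planar topology (Poincar\'e--Bendixson and injectivity results), which is why the $n=2$ proofs work; for $n\geqslant 3$ there is nothing to salvage.
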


The conjecture was proved to be true for $n=2$ independently by Fe{\ss}ler~\cite{Feb1995}, Glutsyuk~\cite{Glu1995} and Gutierrez~\cite{Gut1995}. On the other hand, Cima et al~\cite{Cim1997} proved that it is false for $n\geqslant3$.

Although for $n=2$ the Markus-Yamabe condition is a sufficient condition for GAS, it is not necessary. In fact, consider the vector field $X=(P,Q)$ given by
	\[P(x,y)=-x+xy, \quad Q(x,y)=-y.\] 
Note that the origin is the unique singularity and that it is GAS, due to the existence of a global proper Lyapunov function $L\colon\mathbb{R}^2\to\mathbb{R}$ given by
	\[L(x,y)=\ln(1+x^2)+y^2.\]
Nevertheless, the eigenvalues of $DX(x,y)$ are given by $\lambda_1=-1$ and $\lambda_2=-1+y$ and thus $X$ does not satisfy the Markus-Yamabe condition.

To this end we shall say that a planar $C^1$-vector field satisfying the Markus-Yamabe condition is a \emph{Markus-Yamabe} vector field (MY-vector field). In particular observe that if $X$ is a linear MY-vector field, then it is given by 
	\[X(x,y)=A\left(\begin{array}{c} x \\ y \end{array}\right),\]
where $A$ is a $2\times2$ matrix with eigenvalues having negative real part. Since such matrices are known as \emph{Hurwitz matrices}~\cite{Duan}, we say that such a vector field is a \emph{Hurwitz} vector field.

This lack of practical conditions for GAS is also a problem for other types of vector fields, such as the planar \emph{piecewise smooth vector fields}. Briefly, we recall that a planar piecewise smooth vector field is a tuple $\mathcal{X}=(X^+,X^-;\Sigma)$ such that $\Sigma=h^{-1}(\{0\})$, where $h\colon\mathbb{R}^2\to\mathbb{R}$ is a continuous function and $X^\pm$ are planar $C^1$-vector fields defined in a neighborhood of
	\[\Sigma^\pm=\{(x,y)\in\mathbb{R}^2\colon \pm h(x,y)\geqslant0\}.\]
Let $q\in\mathbb{R}^2$. If $q\in\Sigma^\pm\setminus\Sigma$, then the local trajectory of $\mathcal{X}$ at $q$ is given by the local trajectory of $X^\pm$ at $q$. If $q\in\Sigma$, then the local trajectory can be classified as \emph{crossing}, \emph{sliding} or \emph{escaping}. Moreover, $q$ can also be a new type of singularity, such as \emph{pseudo-singularity}, or \emph{tangential singularity}. See~ Guardia et al~\cite{Guardia}. In this paper we only deal with crossing points. For the definition of the other types of trajectories and for more details on piecewise smooth vector fields, we refer to \cite{CarNovTon2024} and the references therein.

We say that $q\in\Sigma$ is a \emph{crossing} point of $\mathcal{X}$ if
\begin{equation}\label{2}
	\left<X^+(q),\nabla h(q)\right>\cdot\left<X^-(q),\nabla h(q)\right>>0,
\end{equation}
where $\left<\cdot,\cdot\right>$ denotes the standard inner product of $\mathbb{R}^2$. Geometrically \eqref{2} means that $X^+(q)$ and $X^-(q)$ are transversal to $\Sigma$ at $q$ and both point towards either $\Sigma^+$ or $\Sigma^-$. In particular, the local trajectories of $X^\pm$ at $q$ agree on orientation and thus we can define the local trajectory of $\mathcal{X}$ at $q$ as the concatenation to the local trajectories of $X^\pm$.

As far as we know the first approach on the characterization of global asymptotic stability for piecewise smooth vector fields is due to Freire et al~\cite{Frei1998}, where they proved that the continuous matching of two Hurwitz vector fields is GAS. More precisely, if $\mathcal{X}=(X^+,X^-;\Sigma)$ is a planar piecewise linear vector field satisfying the following hypotheses:
\begin{enumerate}
	\item[$(H_1)$] $X^\pm$ are Hurwitz vector fields;
	\item[$(H_2)$] $\Sigma$ is a straight line containing the origin;
	\item[$(H_3)$] $X^+|_\Sigma=X^-|_\Sigma$;
\end{enumerate}
then it is GAS. Braga et al~\cite{LFM1} extended this result by proving that if we replace $H_3$ by:
\begin{enumerate}
	\item[$(H_3')$] The points on $\Sigma\setminus\{0\}$ are of crossing type;
\end{enumerate}
then $\mathcal{X}$ is also GAS. However, later it was proved that further generalizations does not maintain the global stability. More precisely, consider the following generalizations for $H_1$ and $H_2$:
\begin{enumerate}
	\item[$(H_1')$] $X^\pm$ are GAS;
	\item[$(H_1'')$] $X^\pm$ are MY-vector fields;
	\item[$(H_2')$] $\Sigma$ is a polygonal line containing the origin.
\end{enumerate}
It follows from \cite{LFM1,LFM2,LFM3} that none of the following conditions
	\[H_1'\cap H_2\cap H_3', \quad H_1''\cap H_2\cap H_3', \quad H_1\cap H_2'\cap H_3',\]
imply the global asymptotically stability for $\mathcal{X}$. We remark that to know whether a piecewise linear system (and in particular piecewise Hurwitz system) is GAS has applicability in control theory. See Iwatani and Hara~\cite{Control}.

In this paper we generalize these results in a different way. We embed the piecewise smooth vector field $\mathcal{X}$ in a \emph{hybrid structure}, defining a hybrid system. Briefly, a \emph{hybrid dynamical system} is a system whose dynamics are governed both by continuous and discrete laws. That is, a system that can both \emph{flow} and \emph{jump}~\cite{LliSan2025}.

Within this structure we are able to unify the hypotheses 
	\[H_1\cap H_2\cap H_3' \quad \text{and} \quad H_1\cap H_2'\cap H_3',\]
understanding precisely when this new hybrid system is GAS. Moreover new bifurcations are now possible, such as limit cycles.

The paper is organized as follows. In Section~\ref{Sec2} we provide a brief introduction to hybrid systems. The statement of the main result is postponed to Section~\ref{Sec3}. In Section~\ref{Sec4} we have some preliminary and technical results used in the proof of the main result, whose proof is given in Section~\ref{Sec5}. Finally in Section~\ref{Sec6} we have a conclusion and some further thoughts. 

\section{Hybrid systems}\label{Sec2}

Before we state what we mean by a hybrid system in this paper, we present one of the usual examples of the field, the \emph{bouncing ball model}.

\begin{example}[Example~$1$ of~\cite{LliSan2025}]
	Consider the vertical motion of a ball dropped (or tossed) from an initial height $h>0$, with initial velocity $v\in\mathbb{R}$ (here negative velocity means downwards, following gravity, while positive velocity means that the ball was tossed upwards). If the ball is under the acceleration of constant gravity $g>0$, then for $h>0$ the state of the ball is governed by the system of differential equations
	\begin{equation}\label{3}
		\dot h=v, \quad \dot v=-g.
	\end{equation}
	Therefore given any initial condition $q=(h_0,v_0)$, $h_0>0$, it follows from \eqref{3} that the ball reaches the ground $h=0$ after a finite amount of time $t_0>0$, with velocity $v(t_0)<0$. See Figure~\ref{Fig1}.
	\begin{figure}[ht]
		\begin{center}
			\begin{overpic}[height=5cm]{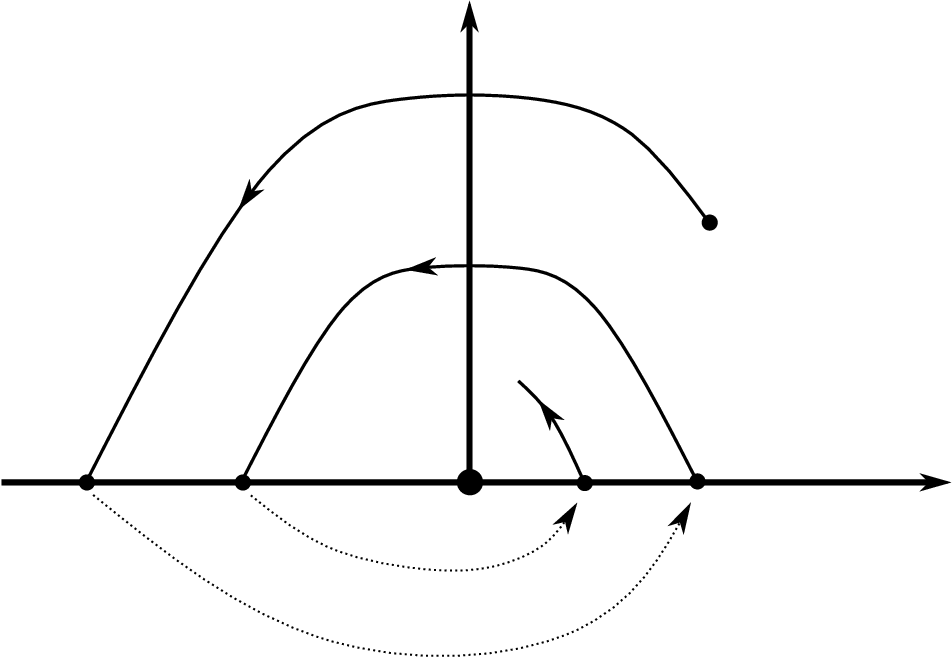} 
				\put(51,65){$h$}
				\put(97,20){$v$}
				\put(45,13){$\mathcal{O}$}
				\put(55,8){$\varphi$}
				\put(65,3){$\varphi$}
				\put(76,45){$q$}			
			\end{overpic}		
		\end{center}
		\caption{Illustration of an orbit of the bouncing ball model. For simplicity, we interchanged the coordinate axes.}\label{Fig1}
	\end{figure}
	At this instant, the ball bounces back upwards and its velocity undergoes an instantaneous change $v(t_0)\mapsto -\rho v(t_0)$, modeled by an inelastic collision with dissipate factor $0<\rho<1$. That is, at $h=0$ our system undergoes a jump (or reset) $\varphi$ given by $\varphi(0,v)=(0,-\rho v)$. Roughly speaking, the reset map represents the instantaneous loss of energy that the system suffers when hitting the ground. Observe that any orbit converges to the origin, which represents a ball standing still in the ground.
\end{example}

For more details on the bouncing ball model, we refer to \cite[Section~$2.2.3$]{SchSch2000} and \cite[Section~$1.2$]{Bernardo2008}. For a different example modeling a type of pinball machine with curved surface, we refer to \cite[Example~$2$]{SanSer2025}. For a recent survey in the field, we refer to~\cite{BKPS2023}.

The notion of a hybrid dynamical system is relatively new. Following Schaft and Schumacher~\cite[Section~$1.1$]{SchSch2000}, we quote: 

\emph{[$\dots$] the area of hybrid systems is still in its infancy and a general theory\- of hybrid systems seems premature. More inherently, hybrid systems is such a wide notion that sticking to a single definition shall be too restrictive. [$\dots$] Another difficulty in discussing hybrid systems is that various scientific communities with their own approaches have contributed (and are still contributing) to the area. At least the following three communities can be distinguished. [The] computer science community [$\dots$] [the] modeling and simulation community [$\dots$] [and the] systems and control community}.

Nevertheless, the field is expanding and in particular it has been receiving more attention in recent years from the community of \emph{Qualitative Theory of Differential Equations and Dynamical Systems}. See for example the recent works on Melnikov method~\cites{LiSheZhaHao2016,LiWeiZhaKap2023}, limit cycles~\cite{Lou2024}, topological horseshoes~\cite{WanZha2024}, chaos~\cite{LliSan2025} and polycycles~\cite{SanSer2025}.

One fact that draw attention to the hybrid systems is that a piecewise smooth vector field can be embedded in a hybrid structure, allowing the existence of limit cycles that otherwise would not be possible. For example Llibre and Teixeira~\cite{LliTei2018} proved that a planar piecewise linear vector field given by two centers cannot have limit cycles. However it follows from~\cites{LiLiu2023,LliSan2025} that if we embed such systems in a hybrid structure, then we have the existence of at least one limit cycle.

As anticipated in the Introduction, in this paper we will embedded the the piecewise smooth vector field $\mathcal{X}=(X^+,X^-;\Sigma)$ in a hybrid structure. This embedding will allow us to draw a precise sufficient and necessary condition for global stability and moreover will point the way for the bifurcation of a limit cycle that would not be possible without the hybrid framework.

We now establish what we mean by a planar hybrid system in this paper. Similarly to the definition of a piecewise vector field, a hybrid system is a tuple $\mathfrak{X}=(X^+,X^-;\Sigma;\varphi)$ such that $\Sigma=h^{-1}(\{0\})$, where $h\colon\mathbb{R}^2\to\mathbb{R}$ is a continuous function; $X^\pm$ are planar $C^1$-vector fields defined in a neighborhood of
	\[\Sigma^\pm=\{(x,y)\in\mathbb{R}^2\colon \pm h(x,y)\geqslant0\},\]
and a map $\varphi\colon\Sigma\to\Sigma$ known as \emph{jump} (or reset map).

In this paper $X^\pm$ are Hurwitz vector fields, $\Sigma=\Sigma_{\rho}$ is the zero locus of the function $h_\rho\colon\mathbb{R}^2\to\mathbb{R}$ given by
	\[h_\rho(x,y)=\left\{\begin{array}{ll} 
				y, & \text{if } x\leqslant0, \\  
				y-\rho x, & \text{if } x\geqslant0, 
			\end{array}\right.\] 
where $\rho\in\mathbb{R}_{\geqslant0}$. That is $\Sigma_\rho$ is the polygonal line (also known as broken line) given by $\Sigma_\rho=\Sigma^1\cup\Sigma^2_\rho$, where
\begin{equation}\label{4}
	\Sigma^1=\{(x,y)\in\mathbb{R}^2\colon x\leqslant0,y=0\}, \quad \Sigma^2_\rho=\{(x,y)\in\mathbb{R}^2\colon x\geqslant0,y=\rho x\}.
\end{equation}
We shall also suppose that $\mathfrak{X}$ has the \emph{crossing property}, i.e. we suppose that
\begin{equation}\label{5}
	\left<X^+(q),\nabla h_\rho(q)\right>\cdot\left<X^-(q),\nabla h_\rho(q)\right>>0,
\end{equation}	
for every $q\in\Sigma_\rho\setminus\{(0,0)\}$. In particular we recall that the geometric interpretation of \eqref{5} is that $X^\pm(q)$ are transversal to $\Sigma$ at $q$ and that both vectors $X^+(q)$ and $X^-(q)$ points to the same direction relatively to $\nabla h_\rho(q)$. For hybrid systems without the crossing property and thus allowing other dynamics such as sliding (or \emph{sticking}), we refer to \cite[Section~$6.2.3$]{SchSch2000} and \cite[p.~$82$]{Bernardo2008}.
	
The jump $\varphi=\varphi_\rho\colon\Sigma_\rho\to\Sigma_\rho$ is given by
\begin{equation}\label{6}
	\varphi_\rho(x,y)=\left\{\begin{array}{ll} 
		(-a|x|^r,0), & \text{if } x\leqslant0, \vspace{0.2cm} \\  
		(bx^s,\rho b x^s), & \text{if } x\geqslant0, 
	\end{array}\right.
\end{equation}
with $a$, $b$, $r$, $s\in\mathbb{R}_{>0}$. Observe that $\varphi_\rho$ has inverse given by
\begin{equation}\label{7}
	\varphi_\rho^{-1}(x,y)=\left\{\begin{array}{ll} 
		\bigl(-\bigl|a^{-1}x\bigr|^\frac{1}{r},0\bigr), & \text{if } x\leqslant0, \vspace{0.2cm} \\  
		\bigl(b^{-\frac{1}{s}}x^{\frac{1}{s}},\rho b^{-\frac{1}{s}}x^{\frac{1}{s}}\bigr), & \text{if } x\geqslant0,
	\end{array}\right.
\end{equation} 
for each $\rho\in\mathbb{R}_{\rho\geqslant0}$. Moreover $\Sigma^1$ and $\Sigma^2_\rho$ are invariant by $\varphi_\rho$, i.e. $\varphi_\rho(\Sigma^1)=\Sigma^1$ and $\varphi_\rho(\Sigma^2_\rho)=\Sigma^2_\rho$. See Figure~\ref{Fig2}.
\begin{figure}[ht]
	\begin{center}
		\begin{overpic}[height=5cm]{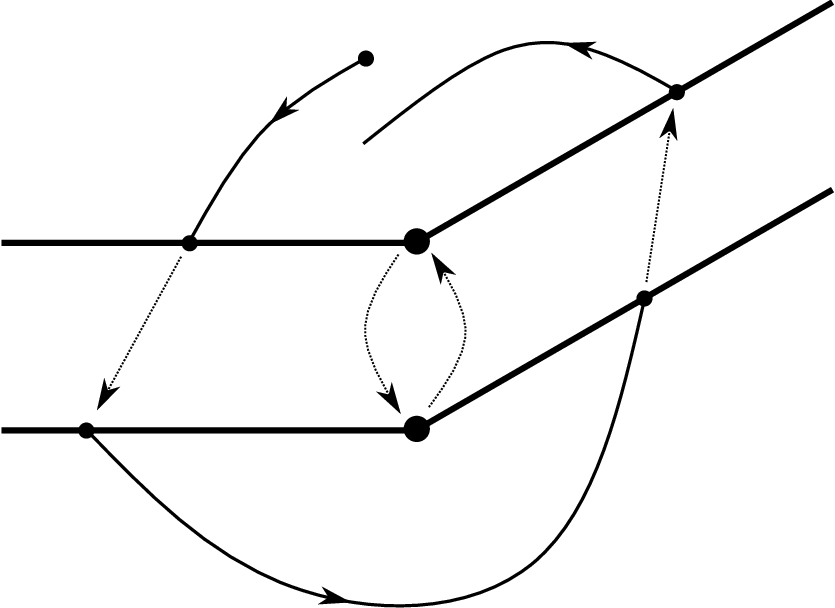} 
			\put(0,45){$\Sigma^1$}
			\put(0,22.5){$\Sigma^1$}
			\put(95,65){$\Sigma^2_\rho$}
			\put(95,42){$\Sigma^2_\rho$}
			\put(47,46){$\mathcal{O}$}
			\put(50,16){$\mathcal{O}$}	
			\put(18,32){$\varphi_\rho$}
			\put(37,32){$\varphi_\rho$}
			\put(56,35){$\varphi_\rho$}
			\put(80,48){$\varphi_\rho$}
			\put(46,65){$q_0$}
			\put(23,40){$q_1$}
			\put(6,17){$q_2$}
			\put(22,54){$X^+$}
			\put(71,10){$X^-$}
		\end{overpic}		
	\end{center}
	\caption{Illustration of $\varphi_\rho$ and $\mathfrak{X}$. The origin $\mathcal{O}$ is interpreted as a singularity. By abuse of notation we drew two copies of $\Sigma_\rho$.}\label{Fig2}
\end{figure}

The dynamics of $\mathfrak{X}=(X^+,X^-;\Sigma;\varphi)$ work as follows. Given $q_0\in\mathbb{R}^2\setminus\Sigma$, the local trajectory of $\mathfrak{X}$ at $q_0$ is given by the local trajectory of $X^\pm$. If this trajectory never intersects $\Sigma_\rho$, then we are done. If it does intersect $\Sigma$ at a point $q_1$, then we apply the jump obtaining a point $q_2=\varphi_\rho(q_1)$. It follows from \eqref{5} that the local trajectories of $X^\pm$ at $q_2$ are transversal to $\Sigma$ and agree on orientation. Hence we are able to follow exactly one of such trajectories leaving $\Sigma$. The process now repeats. See Figure~\ref{Fig2}.

In simple words, the dynamic of the hybrid system $\mathfrak{X}=(X^+,X^-;\Sigma_\rho;\varphi)$ is similar to the dynamics of the piecewise vector field $\mathcal{X}=(X^+,X^-;\Sigma_\rho)$, with the exception that when hitting the switching set $\Sigma$ we apply the jump before crossing it. In particular observe from \eqref{5} that if $a=b=r=s=1$, then $\varphi_\rho$ reduces to the identity map and thus the hybrid system becomes a piecewise vector field. 

Similarly to the fact that switching between $X^+$ and $X^-$ has the physic interpretation of a particle switching from one medium to another (e.g. the refraction of light when passing from air to water), the jump $\varphi_\rho$ represents the instantaneous gain or loss of energy that such a particle may suffer when transitioning between the mediums.

\section{Statement of the main result}\label{Sec3}

We now provide the definitions for a precise statement of our main result. Let $\mathfrak{X}=(X^+,X^-;\Sigma_\rho;\varphi_\rho)$ be a hybrid system satisfying the following hypotheses:
\begin{enumerate}
	\item[$(H_1)$] $X^\pm$ are Hurwitz vector fields;
	\item[$(H_2')$] $\Sigma_\rho=\Sigma^1\cup\Sigma^2_\rho$ is the polygonal straight line given by \eqref{4};
	\item[$(H_3')$] $\mathfrak{X}$ has the crossing property \eqref{5}.
\end{enumerate}

It follows from $H_1$ that $X^\pm$ can only have the following types of singulari\-ties at the origin:
\begin{enumerate}[label=(\roman*)]
	\item An attracting node with distinct eigenvalues, denoted by $N_1$;
	\item An attracting non-diagonalizable node, denoted by $N_2$;
	\item An attracting focus, denoted by $F$.
\end{enumerate}
In particular, note that if $X^\pm$ have an attracting star node (i.e. a diagonali\-zable node with equal eigenvalues), then it does satisfy $H_1$. In our main result we provide a complete characterization of the dynamics of $\mathfrak{X}$.

\begin{main}\label{Main1}
	Let $\mathfrak{X}=(X^+,X^-;\Sigma_\rho;\varphi_\rho)$ be a hybrid system satisfying hypotheses $H_1$, $H_2'$ and $H_3'$. Then the following statements hold.
	\begin{enumerate}[label=(\alph*)]
		\item If $X^+$ or $X^-$ is not of type $F$, then $\mathfrak{X}$ is GAS.
		\item If $X^+$ and $X^-$ are both of type $F$, then exactly one of the following statements hold.
		\begin{enumerate}[label=(\roman*)]
			\item The origin is GAS.
			\item The origin is globally asymptotically unstable.
			\item The origin is a global center.
			\item $\mathfrak{X}$ has a unique limit cycle which is hyperbolic.
		\end{enumerate}
	\end{enumerate}
	Moreover, all statements occur and there is an explicit analytic charac\-terization of it.
\end{main}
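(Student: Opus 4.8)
\emph{Overview and reduction to a one-dimensional return map.}
The plan is to reduce the whole statement to the analysis of a one-dimensional first-return map, read off part (b) from its explicit form, and dispose of part (a) by showing that a node forces every orbit to die at the origin. The reduction rests on the fact that, because $X^{\pm}$ are linear and $\Sigma_\rho$ consists of two rays issuing from the origin, every piece of the dynamics is positively homogeneous of degree one: the transition along a trajectory of $X^{\pm}$ from one ray to another, when defined, is multiplication by a positive constant in the radial coordinate, and---since $\varphi_\rho$ preserves $\Sigma^1$ and $\Sigma^2_\rho$---the jump reads $u\mapsto a u^{r}$ on $\Sigma^1$ and $v\mapsto b v^{s}$ on $\Sigma^2_\rho$ in the radial coordinate. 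I would first pin down the geometry: the crossing property \eqref{5} makes $\langle X^{\pm},\nabla h_\rho\rangle$ of constant sign on each ray, with matching sign for $X^{+}$ and $X^{-}$; in particular no eigendirection of $X^{\pm}$ lies on $\Sigma_\rho$, and a focus cannot have both rays as ``entrances'' (or both as ``exits''). Hence when $X^{+}$ and $X^{-}$ are both of type $F$ the hybrid orbit enters $\Sigma^{+}$ through one ray and leaves through the other, winds through $\Sigma^{+}$ under $X^{+}$, is reset, winds through $\Sigma^{-}$ under $X^{-}$, and is reset again; composing these four maps in the radial coordinate yields a globally defined first-return map on $\Sigma^2_\rho$ of the form
\[
P(v)=K v^{rs},\qquad K>0,
\]
where $K$ is a product of $a$, $b$ and the two radial transition constants of $X^{+}$, $X^{-}$ with exponents dictated by $r$, $s$. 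Those transition constants are explicit analytic functions of the vector fields: after a linear change of variables bringing a focus to the standard contracting-rotation form with eigenvalues $\sigma\pm i\omega$, the radial gain across angular amplitude $\Delta$ equals $e^{\sigma\Delta/\omega}$ times the distortion factor of the (generally non-orthogonal) eigenbasis, and the amplitudes for $\Sigma^{\pm}$ are read off from $\rho$.

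\emph{The case of two foci.}
From $P(v)=Kv^{rs}$ the four alternatives of (b) are immediate. If $rs=1$ then $P(v)=Kv$, so the origin is GAS for $K<1$, globally asymptotically unstable for $K>1$, and a global center for $K=1$. If $rs\neq1$ then $P$ has exactly one positive fixed point $v^{*}=K^{1/(1-rs)}$, whose multiplier is $P'(v^{*})=rs\neq1$; hence $v^{*}$ is hyperbolic and gives a unique hyperbolic limit cycle---globally attracting with the origin a repeller when $rs<1$, and unstable with the origin a local attractor when $rs>1$. The four possibilities are mutually exclusive, and each occurs: fix $r,s$ in the appropriate regime and then choose $a,b$ and the two foci so that $K$ falls in the required interval. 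Since $K$ and $rs$ are explicit in the data, this is also the promised analytic characterization. Finally, an orbit starting off $\Sigma_\rho$ reaches $\Sigma_\rho$ in finite time---a linear focus has $\dot\theta$ of constant sign bounded away from $0$, so it meets every ray---so its forward limit set is governed by $P$.

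\emph{The case with a node.}
When $X^{+}$ or $X^{-}$ is of type $N_1$ or $N_2$ I would show that the first-return map is nowhere defined: the node traps every orbit entering its region, that orbit converging to the origin (tangentially to the slow eigendirection, or to the unique eigendirection in the $N_2$ case) before it can reach the opposite ray. Granting this, orbits that ever enter the node's region remain there and tend to $0$, while orbits that never enter it stay in the complementary region and tend to $0$ because that field is Hurwitz; hence $\mathfrak{X}$ is GAS. The trapping statement is the step I expect to be the main obstacle. The subtlety is that the eigenbasis of a node need not be orthogonal, so its two eigenrays may subtend an angle close to $\pi$ and the naive argument ``the region is wider than a quadrant, hence contains a slow eigenray'' does not apply directly; one must instead use the crossing property \emph{at both} rays $\Sigma^1$ and $\Sigma^2_\rho$ simultaneously, together with the orientation data it carries, to locate the slow eigenray relative to $\Sigma^{\pm}$ and thereby rule out a through-transition, treating $N_1$, $N_2$ and the two crossing orientations as separate subcases. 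With parts (a) and (b) established, verifying that all four items of (b) actually occur and recording the explicit form of the characterization is bookkeeping.
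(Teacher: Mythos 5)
Your treatment of part (b) is essentially the paper's proof in different packaging: your return map $P(v)=Kv^{rs}$ on $\Sigma^2_\rho$ is exactly the composition that the paper encodes in the displacement map \eqref{11} and condition \eqref{15} (built from the explicit focus solutions \eqref{10} and the power-law jumps \eqref{6}--\eqref{7}), your dichotomy $rs=1$ versus $rs\neq1$ is the paper's $r=1/s$ versus $r\neq 1/s$, and your observation that the fixed point has multiplier $P'(v^{*})=rs\neq1$ gives hyperbolicity a little more cleanly than the paper's differentiation of $\Delta$. For two foci the circulation argument you sketch (a focus crosses every ray, so with the crossing property one ray is an entrance and the other an exit) is sound, so this half is fine.

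The genuine gap is in part (a), and it sits exactly at the step you yourself flag and then leave open: the trapping claim. Your proposed substitute for a proof --- locating the slow eigenray of the node using the crossing property \eqref{5} at the two rays together with orientation data --- cannot work as stated, because for $\rho>0$ the crossing property does not prevent \emph{both} eigendirections of the node from lying outside its region. Concretely, take $\rho=1$ and the Hurwitz matrix with rows $(0,-10)$ and $(1/5,-3)$: its eigenvalues are $-1,-2$ with eigenvectors $(1,1/10)$ and $(1,1/5)$, it is transversal to both $\Sigma^1$ and $\Sigma^2_\rho$, and yet the sector $\Sigma^+$ contains no invariant direction, so $\dot\theta$ has constant sign there and every orbit entering through $\Sigma^2_\rho$ crosses the node's region and exits through $\Sigma^1$. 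In such a configuration the first-return map is defined everywhere, so ``the return map is nowhere defined'' cannot be the mechanism behind statement (a), and your case analysis would have to confront this through-passage situation head on rather than rule it out. The paper does not get trapping from the crossing property either; it gets it from the global normal form of Lemma~\ref{L1}, which places the eigenlines of a node in the second quadrant (hence inside its region) and fixes the circulation, after which Lemmas~\ref{L2} and \ref{L3} make statement (a) immediate. So the missing ingredient in your plan is precisely that normalization step --- and if you import it, you should verify carefully that the equivalence of Lemma~\ref{L1} really preserves the broken line when $\rho>0$ (the matrices $C^\pm$ there fix the line $y=\rho x$ pointwise but not $\Sigma^1$), since otherwise the through-passage configuration above must be treated by a separate argument rather than by the trapping dichotomy you describe.
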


\section{Preliminary results}\label{Sec4}

In this section we provide some technical results on global normal forms for $\mathfrak{X}$ and on the dynamics of Hurwitz linear vector fields of type $N_1$, $N_2$ and $F$.

\subsection{Global normal form}

\begin{lemma}\label{L1}
	Let $\mathfrak{X}=(X^+,X^-;\Sigma_\rho;\varphi_\rho)$ be a hybrid system satisfying hypotheses $H_1$, $H_2'$ and $H_3'$. Let also $B^\pm$ be the Hurwitz matrices such that
		\[X^\pm(x,y)=B^\pm \left(\begin{array}{c} x \\ y \end{array}\right).\]
	Then there is a topological equivalence between the piecewise linear systems $(X^+,X^-;\Sigma_\rho)$ and $(Y^+,Y^-;\Sigma_\rho)$, where $Y^\pm$ are given by
	\begin{equation*}
		Y^\pm(x,y)=A^\pm \left(\begin{array}{c} x \\ y \end{array}\right), \quad A=\left(\begin{array}{cc} \sigma^\pm & \delta^\pm \\ 1 & 0 \end{array}\right),
	\end{equation*}
	with $\sigma^\pm=\operatorname{tr} B^\pm<0$ and $\delta^\pm=-\det B^\pm<0$.
\end{lemma}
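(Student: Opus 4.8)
The plan is to put $X^{+}$ and $X^{-}$ into companion form by a \emph{separate} linear change of coordinates on each linear piece, and then to glue these two changes into a single homeomorphism of the plane by deforming them along the switching set. First, note that $H_{3}'$ forces each $B^{\pm}$ to be non-scalar: a scalar Hurwitz matrix $\lambda I$ (with $\lambda<0$) defines the radial field $(x,y)\mapsto\lambda(x,y)$, which is tangent to every ray through the origin, hence tangent to both $\Sigma^{1}$ and $\Sigma^{2}_{\rho}$, contradicting the transversality demanded by \eqref{5} on $\Sigma_{\rho}\setminus\{(0,0)\}$. A non-scalar real $2\times2$ matrix admits a cyclic vector, so there exist $T^{\pm}\in GL(2,\mathbb{R})$ with $(T^{\pm})^{-1}B^{\pm}T^{\pm}=A^{\pm}$, where $A^{\pm}$ is the companion matrix of the characteristic polynomial of $B^{\pm}$; writing that polynomial as $\lambda^{2}-\sigma^{\pm}\lambda-\delta^{\pm}$ with $\sigma^{\pm}=\operatorname{tr}B^{\pm}$ and $\delta^{\pm}=-\det B^{\pm}$, the Hurwitz condition gives $\sigma^{\pm}<0$ and $\delta^{\pm}<0$, and $A^{\pm}$ has exactly the form stated. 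This already provides a linear conjugacy between each individual field $X^{\pm}$ and $Y^{\pm}$; the difficulty is that in general $T^{+}\ne T^{-}$, and neither one maps $\Sigma_{\rho}$ onto $\Sigma_{\rho}$, so the two changes cannot simply be patched and a genuine topological equivalence is needed.

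To construct it I would exploit the homogeneity of the whole configuration: $X^{\pm}$ and $Y^{\pm}$ are linear and $\Sigma_{\rho}=\Sigma^{1}\cup\Sigma^{2}_{\rho}$ is a union of rays through the origin, so the entire picture is invariant under the positive scalings $(x,y)\mapsto(\lambda x,\lambda y)$. Because of the crossing property, the flow of $X^{\pm}$ (resp.\ $Y^{\pm}$) on the closed region $\Sigma^{\pm}$ induces a transition map $P_{X}^{\pm}$ (resp.\ $P_{Y}^{\pm}$) sending a ray of $\Sigma_{\rho}$ onto a ray of $\Sigma_{\rho}$, and by homogeneity each such map is multiplication by a positive constant in the natural linear coordinate of the rays. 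I would then produce the homeomorphism $\Phi\colon\mathbb{R}^{2}\to\mathbb{R}^{2}$ in two steps. First, choose a homeomorphism $g\colon\Sigma_{\rho}\to\Sigma_{\rho}$ preserving each ray that simultaneously intertwines $P_{X}^{+}$ with $P_{Y}^{+}$ and $P_{X}^{-}$ with $P_{Y}^{-}$. Second, extend $g$ across each region $\Sigma^{\pm}$ to a homeomorphism conjugating the flow of $X^{\pm}$ to that of $Y^{\pm}$ by the standard flow-box and fundamental-domain construction; this is available because $X^{\pm}$ and $Y^{\pm}$ are linearly conjugate and so have the same orbit structure on $\Sigma^{\pm}$, including the type ($N_{1}$, $N_{2}$ or $F$) of the singularity at the origin. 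Since both extensions restrict to $g$ on $\Sigma_{\rho}$, they glue to the required $\Phi$, which carries orbits of $(X^{+},X^{-};\Sigma_{\rho})$ to orbits of $(Y^{+},Y^{-};\Sigma_{\rho})$ respecting the time orientation.

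The technical heart is the first step. Writing $g$ on the two rays as a pair of increasing self-homeomorphisms of $(0,\infty)$ and using that the maps $P^{\pm}$ are multiplications by constants, the intertwining conditions become two relations of the shape $g_{i}(c\,t)=c'\,g_{j}(t)$; eliminating one of them reduces everything to a single functional equation $g_{1}(k_{X}\,t)=k_{Y}\,g_{1}(t)$, where $k_{X}$ and $k_{Y}$ are the multipliers of the first-return maps of $(X^{+},X^{-};\Sigma_{\rho})$ and $(Y^{+},Y^{-};\Sigma_{\rho})$ on a ray of $\Sigma_{\rho}$. Such a $g_{1}$ exists — a suitable power $t\mapsto t^{\alpha}$, or the identity when $k_{X}=k_{Y}$ — exactly when $k_{X}$ and $k_{Y}$ lie on the same side of $1$. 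If $X^{+}$ or $X^{-}$ is of type $N_{1}$ or $N_{2}$ this is automatic, since then every transition map is a strict contraction and $k_{X},k_{Y}<1$, so the delicate case is $F$--$F$, where one must check that replacing $B^{\pm}$ by its companion form does not push the return multiplier across $1$. I expect this to be the main obstacle, and I would handle it through a direct computation of $k$ as the product of the two ray-to-ray multipliers, expressed in terms of the eigenvalue data of $B^{\pm}$ and the angular sectors cut out by $\Sigma^{\pm}$, verifying that the trace and determinant of $B^{\pm}$ — which are exactly what the companion-form reduction preserves — already control the side of $1$ on which $k$ falls. This is in the same spirit as the classical reductions of planar piecewise linear systems to Li\'{e}nard-type canonical forms, here adapted to a crossing (rather than continuous) matching and to a polygonal switching line.
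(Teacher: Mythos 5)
Your reduction to companion form of each piece separately is fine (and your observation that the crossing property \eqref{5} rules out scalar matrices is correct), but the argument then defers exactly the substantive content of the lemma to an ``expected'' computation that is never carried out. In the focus--focus case everything hinges on showing that the return multipliers $k_X$ and $k_Y$ of the two piecewise systems lie on the same side of $1$, and your heuristic that ``trace and determinant already control the side of $1$'' is not something you can take on faith: for a linear focus the map from a ray $u$ to a non-antipodal ray $v$ has multiplier $e^{\lambda\,\Delta\theta/\mu}\,|Mu|/|Mv|$, where $M$ is the linear change of variables normalizing the focus and $\Delta\theta$ is the angle swept in the normalized coordinates; both $\Delta\theta$ and the distortion ratio $|Mu|/|Mv|$ change when $B^\pm$ is replaced by a conjugate matrix with the same trace and determinant. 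Only when $\rho=0$ (antipodal rays) do these dependencies cancel and each half-plane multiplier collapse to $e^{\lambda\pi/\mu}$. For $\rho>0$ the sector multipliers are genuinely not functions of $(\sigma^\pm,\delta^\pm,\rho)$ alone, so establishing that the \emph{product} of the four of them stays on the same side of $1$ along the conjugacy class is precisely the lemma, and it is missing from your proposal. Two subsidiary claims you lean on in the non-focus cases are also false as stated: a node's flow need not induce a ray-to-ray transition map at all (if an eigendirection lies inside the sector, orbits entering through one ray converge to the origin without reaching the other ray), and when such a map exists it need not be a contraction, since a non-normal Hurwitz matrix can produce transient growth across a sector; so ``$k_X,k_Y<1$ is automatic'' requires a different argument (e.g.\ a global asymptotic stability argument) that you do not give.

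This is also quite far from the paper's route, which avoids flow analysis altogether: there one writes down explicit matrices $C^\pm$ (well defined because the crossing quantity $\eta^\pm$ in \eqref{9} is nonzero, and invertible because $\sigma^\pm<0$, $\delta^\pm<0$, $\rho\geqslant0$) with $A^\pm=C^\pm B^\pm(C^\pm)^{-1}$, chosen so that $C^+$ and $C^-$ agree on the switching set, and the equivalence is the piecewise-linear map $H$ built from them. Besides being constructive, that normalization is arranged to act as the identity on $\Sigma_\rho$, which is what later permits the jump $\varphi_\rho$ of \eqref{6} to be carried over unchanged to the normal form; your homeomorphism, built from a nontrivial boundary map $g$, would transport the jump to $g\circ\varphi_\rho\circ g^{-1}$ and therefore, even if completed, would not serve the use the paper makes of this lemma in the proof of Theorem~\ref{Main1}.
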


\begin{proof}
	If we let
		\[B^\pm=\left(\begin{array}{cc} b_{11}^\pm & b_{12}^\pm \vspace{0.2cm} \\ b_{21}^\pm & b_{22}^\pm \end{array}\right),\]
	then the crossing property~\eqref{5} restricted at $\Sigma^2_\rho$ writes
		\[\bigl(b_{21}^++(b_{22}^+-b_{11}^+)\rho-b_{12}^+\rho^2\bigr)\bigl(b_{21}^-+(b_{22}^--b_{11}^-)\rho-b_{12}^-\rho^2\bigr)x^2>0.\]
	In particular, we obtain
	\begin{equation}\label{9}
		\eta^\pm:=b_{21}^\pm+(b_{22}^\pm-b_{11}^\pm)\rho-b_{12}^\pm\rho^2\neq0.
	\end{equation}
	Consider the matrices 
		\[C^\pm=\left(\begin{array}{cc} c_{11}^\pm & c_{12}^\pm \vspace{0.2cm} \\ c_{21}^\pm & c_{22}^\pm \end{array}\right),\]
	where
	\[\begin{array}{cc}
		\displaystyle c_{11}^\pm=1-\frac{(\delta^\pm-b_{12}^\pm)\rho+b_{22}}{\eta^\pm}\rho, &\displaystyle c_{12}^\pm=\frac{(\delta^\pm-b_{12}^\pm)\rho+b_{22}}{\eta^\pm}, \vspace{0.2cm} \\
		\displaystyle c_{21}^\pm=\rho+\frac{b_{12}^\pm\rho^2+b_{11}^\pm\rho-1}{\eta^\pm}\rho, &\displaystyle c_{22}^\pm=-\frac{b_{12}^\pm\rho^2+b_{11}^\pm\rho-1}{\eta^\pm}.
	\end{array}\]
	It follows from \eqref{9} that $C^\pm$ are well defined. Moreover we observe that
		\[\det C^\pm=-\frac{\delta^\pm\rho^2+\sigma^\pm\rho-1}{\eta^\pm}.\]
	Hence it follows from \eqref{9}, $\delta^\pm<0$, $\sigma^\pm<0$ and $\rho\geqslant0$ that $\det C^\pm\neq0$ and thus $C^\pm$ is a linear change of variables. From straightforward calculations one can see that
		\[A^\pm=C^\pm B^\pm(C^\pm)^{-1} \quad \text{and} \quad C^\pm\left(\begin{array}{c} x \\ y \end{array}\right)=\left(\begin{array}{c} x \\ y \end{array}\right), \quad \text{if} \quad (x,y)\in\Sigma_\rho.\]
	Therefore if we define the map $H\colon\mathbb{R}^2\to\mathbb{R}^2$ by
		\[H(x,y)=\left\{\begin{array}{ll}
					C^+\left(\begin{array}{c} x \\ y \end{array}\right), & \text{if } (x,y)\in\Sigma^+, \vspace{0.2cm} \\
					C^-\left(\begin{array}{c} x \\ y \end{array}\right), & \text{if } (x,y)\in\Sigma^-,
				\end{array}\right.\]
	then it is a well defined topological equivalence between the piecewise systems $(X^+,X^-;\Sigma_\rho)$ and $(Y^+,Y^-;\Sigma_\rho)$. 
\end{proof}

For simplicity throughout the remaining of this section we state our results only for $A^+$. Similar results can be proved for $A^-$.

\subsection{Hurwitz vector fields of type \boldmath{$N_1$}}

Suppose that the origin is an attracting node of $X^+$ with distinct eigenvalues given by
	\[r_1^+=\frac{\sigma^+}{2}+\frac{\sqrt{(\sigma^+)^2+4\delta^+}}{2}, \quad r_2^+=\frac{\sigma^+}{2}-\frac{\sqrt{(\sigma^+)^2+4\delta^+}}{2}.\]
In particular, observe that $r_2^+<r_1^+<0$. Following~\cite{LFM1} we define the sets
	\[\begin{array}{l}
		R_1^+=\{(x,y)\in\Sigma_\rho^+\setminus\Sigma_\rho\colon x>r_1^+y\}, \vspace{0.2cm} \\ E_1^+=\{(x,y)\in\Sigma_\rho^+\setminus\Sigma_\rho\colon x=r_1^+y\}, \vspace{0.2cm} \\
		R_2^+=\{(x,y)\in\Sigma_\rho^+\setminus\Sigma_\rho\colon r_2^+y<x<r_1^+y\}, \vspace{0.2cm} \\
		E_2^+=\{(x,y)\in\Sigma_\rho^+\setminus\Sigma_\rho\colon x=r_2^+y\}, \vspace{0.2cm} \\
		R_3^+=\{(x,y)\in\Sigma_\rho^+\setminus\Sigma_\rho\colon x<r_2^+y\},
	\end{array}\]
see Figure~\ref{Fig3}. 
\begin{figure}[ht]
	\begin{center}
		\begin{overpic}[width=8cm]{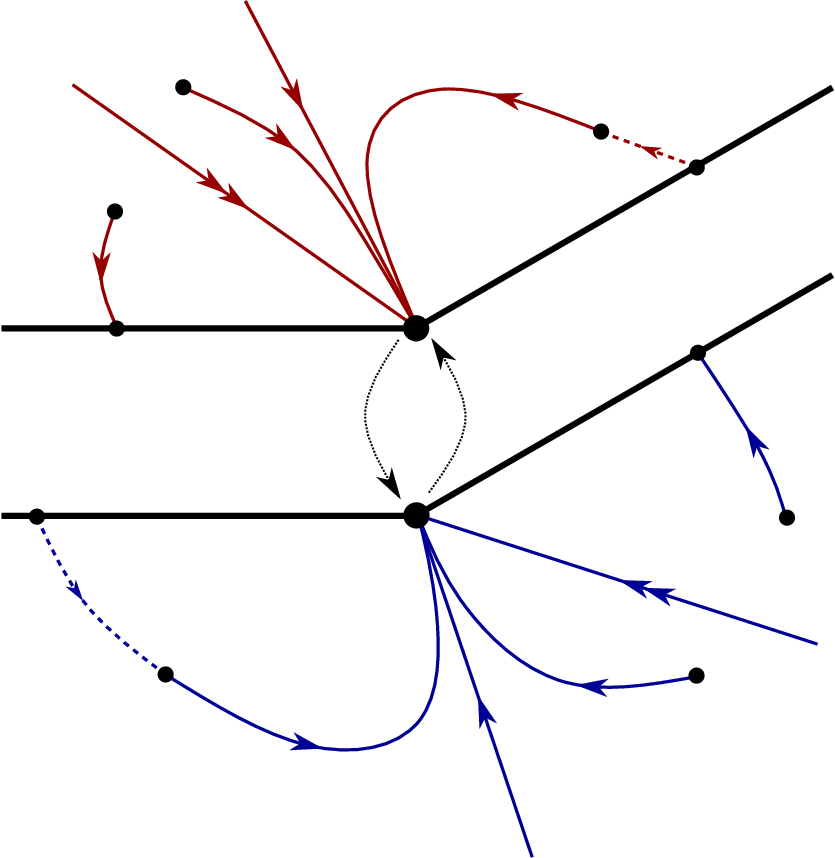} 
			\put(75,95){$R_1^+$}
			\put(15,95){$R_2^+$}
			\put(0,75){$R_3^+$}
			\put(30.5,98){$E_1^+$}
			\put(2.5,90){$E_2^+$}
			\put(68.5,81){$q$}
			\put(19,86){$q$}
			\put(15,75){$q$}
			\put(7,56){$X^+(t^2_q,q)$}
			\put(78,75){$X^+(t^1_q,q)$}
			\put(25,42){$\Sigma_\rho$}
			\put(95,91){$\Sigma_\rho$}
			\put(46,56){$\mathcal{O}$}
			\put(44,35){$\mathcal{O}$}
			\put(55.5,51){$\varphi_\rho$}
			\put(36.5,51){$\varphi_\rho$}
			\put(10,10){$R_1^-$}
			\put(80,10){$R_2^-$}
			\put(75,40){$R_3^-$}
			\put(63,-1){$E_1^-$}
			\put(95,23){$E_2^-$}
		\end{overpic}		
	\end{center}
	\caption{Illustration of two Hurwitz vector fields $X^\pm$ of type $N_1$. The curves in red (resp. blue) are the solutions of $X^+$ (resp. $X^-$). For interpretation of the references to color in this figure legend, the reader is referred to the web version of this article.}\label{Fig3}
\end{figure}

Given $q\in\Sigma_\rho^+$, let $X^+(t,q)$ denote the solution of $X^+$ with initial condition $X^+(0,q)=0$. The following lemma follows similarly to~\cite[Lemma~$5$]{LFM1}.

\begin{lemma}\label{L2}
	Let $X^+$ be a Hurwitz vector field of type $N_1$ defined in $\Sigma_\rho^+$. Then the following statements hold (see Figure~\ref{Fig3}).
	\begin{enumerate}[label=(\alph*)]
		\item If $q\in E_1^+\cup E_2^+$, then $X^+(t,q)\to\mathcal{O}$ as $t\to+\infty$.
		\item If $q\in R_1^+$, then
		\begin{enumerate}[label=(\roman*)]
			\item $X^+(t,q)\to\mathcal{O}$ as $t\to+\infty$;
			\item there is $t^1_q<0$ such that $X^+(t^1_q,q)\in\Sigma_\rho$.
		\end{enumerate}
		\item If $q\in R_2^+$, then $X^+(t,q)\to\mathcal{O}$ as $t\to+\infty$.
		\item If $q\in R_3^+$, then there is $t^2_q>0$ such that $X^+(t^2_q,q)\in\Sigma_\rho$.
	\end{enumerate}
\end{lemma}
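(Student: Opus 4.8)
The plan is to reduce to the linear normal form, diagonalize by hand, and then read each assertion off the explicit flow. Throughout I would take $X^+$ in the normal form of Lemma~\ref{L1}, i.e.\ $\dot x=\sigma^+x+\delta^+y$, $\dot y=x$, with distinct eigenvalues $r_2^+<r_1^+<0$ satisfying $r_1^++r_2^+=\sigma^+$ and $r_1^+r_2^+=-\delta^+$. Setting $u=x-r_2^+y$ and $v=x-r_1^+y$, a one-line computation using these two identities gives $\dot u=r_1^+u$ and $\dot v=r_2^+v$, so along any solution $u(t)=u_0e^{r_1^+t}$, $v(t)=v_0e^{r_2^+t}$. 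In particular the signs of $u$ and $v$ are invariant along solutions, both coordinates decay to $0$ as $t\to+\infty$, and both diverge as $t\to-\infty$. I would also record $y=(u-v)/(r_1^+-r_2^+)$, $x=(r_1^+u-r_2^+v)/(r_1^+-r_2^+)$, and the fact that $y>0$ on $\Sigma_\rho^+\setminus\Sigma_\rho$, which forces $u>v$ there.

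The second step is to rewrite, in the $(u,v)$-plane, the stratification of $\Sigma_\rho^+\setminus\Sigma_\rho$ together with $\Sigma_\rho$. From the defining inequalities, inside $\Sigma_\rho^+\setminus\Sigma_\rho$ the region $R_1^+$ corresponds to $v>0$, $R_2^+$ to $u>0,\,v<0$, $R_3^+$ to $u<0$, $E_1^+$ to $v=0$ and $E_2^+$ to $u=0$. A short sign check shows that the entire sector $\{u>0,\,v<0\}$ and the entire rays $\{v=0,\,u>0\}$, $\{u=0,\,v<0\}$ already lie in $\Sigma_\rho^+\setminus\Sigma_\rho$, so $R_2^+$, $E_1^+$, $E_2^+$ are exactly these. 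For the two outer regions one must bring in $\Sigma_\rho$, which in the $(u,v)$-plane becomes the ray $\{u=v<0\}$ ($=\Sigma^1$) together with the ray $\{u>0,\,v=ku\}$ ($=\Sigma^2_\rho$), where $k=(1-\rho r_1^+)/(1-\rho r_2^+)\in(0,1]$; this gives $R_1^+=\{u>0,\ 0<v<ku\}$ and $R_3^+=\{u<0,\ v<u\}$. The delicate point is exactly this last item: $\Sigma^2_\rho$ sits entirely in $\{u>0\}$, so it truncates $R_1^+$ from above but is irrelevant to $R_3^+$, which is bounded only by $E_2^+$ and $\Sigma^1$; handling the slanted ray $\Sigma^2_\rho$ for $\rho>0$---rather than merely the positive $x$-axis---is where a careless argument would go wrong.

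With this dictionary every statement is immediate. On $E_1^+$ one has $v_0=0$, $u_0>0$, so $v(t)\equiv0$, $u(t)>0$, the solution stays on $E_1^+$ and tends to $\mathcal{O}$, and symmetrically on $E_2^+$, giving (a). On $R_2^+$ the signs $u>0>v$ persist, so the solution remains in $R_2^+$ and tends to $\mathcal{O}$, giving (c). On $R_1^+$ both coordinates stay positive and $v(t)/u(t)=(v_0/u_0)e^{(r_2^+-r_1^+)t}$ is strictly decreasing and $<k$ at $t=0$, hence stays in $(0,k)$ for $t>0$ (so the solution remains in $R_1^+$ and tends to $\mathcal{O}$, item (i)) and increases to $+\infty$ as $t\to-\infty$, so by the intermediate value theorem it equals $k$ at a unique $t^1_q<0$, where $u(t^1_q)>0$, so the solution meets $\Sigma^2_\rho\subset\Sigma_\rho$ (item (ii)). On $R_3^+$ both coordinates stay negative and $u(t)/v(t)=(u_0/v_0)e^{(r_1^+-r_2^+)t}$ is strictly increasing, lies in $(0,1)$ at $t=0$, and tends to $+\infty$, so it equals $1$ at a unique $t^2_q>0$, at which $u=v<0$, i.e.\ the solution meets $\Sigma^1\subset\Sigma_\rho$; moreover for $t\in[0,t^2_q)$ one has $u(t)<0$ and $v(t)<u(t)$, so the solution stays in $R_3^+$ until $t^2_q$, giving (d).

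I do not expect a genuine obstacle: once $u$ and $v$ are in place the dynamics is trivial and everything is a bounded computation. The one place that needs care is the geometric translation of the second step---correctly describing $\Sigma_\rho^+\setminus\Sigma_\rho$, $\Sigma^1$ and $\Sigma^2_\rho$ in the $(u,v)$-plane, keeping track of which branch of $\Sigma_\rho$ a given solution meets in (b)(ii) and (d), and treating the case $\rho>0$ correctly.
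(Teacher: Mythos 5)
Your proposal is correct: the dictionary $u=x-r_2^+y$, $v=x-r_1^+y$ does diagonalize the Li\'enard normal form, the identifications $R_1^+=\{u>0,\ 0<v<ku\}$, $R_2^+=\{u>0>v\}$, $R_3^+=\{u<0,\ v<u\}$ with $\Sigma^1=\{u=v\leqslant0\}$ and $\Sigma^2_\rho=\{v=ku,\ u\geqslant0\}$, $k=(1-\rho r_1^+)/(1-\rho r_2^+)\in(0,1]$, all check out, and the monotone-ratio argument yields (a)--(d). The paper gives no proof of its own, merely asserting that the lemma ``follows similarly'' to Lemma~5 of \cite{LFM1} (the straight-line case $\rho=0$); your argument is essentially that standard eigen-sector reasoning, with the only genuinely new point --- the slanted ray $\Sigma^2_\rho$ for $\rho>0$ --- handled correctly.
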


\subsection{Hurwitz vector fields of type \boldmath{$N_2$}}

Suppose that the origin is an attracting non-diagonalizable node of $X^+$ with repeated eigenvalue given by
	\[r^+=\frac{\sigma^+}{2}<0.\]
Following~\cite{LFM1} we define the sets
	\[\begin{array}{l}
		S_1^+=\{(x,y)\in\Sigma_\rho^+\setminus\Sigma_\rho\colon x>r^+y\}, \vspace{0.2cm} \\ 	
		E^+=\{(x,y)\in\Sigma_\rho^+\setminus\Sigma_\rho\colon x=r^+y\}, \vspace{0.2cm} \\
		S_2^+=\{(x,y)\in\Sigma_\rho^+\setminus\Sigma_\rho\colon x<r^+y\},
\end{array}\]
see Figure~\ref{Fig4}. 
\begin{figure}[ht]
	\begin{center}
		\begin{overpic}[width=8cm]{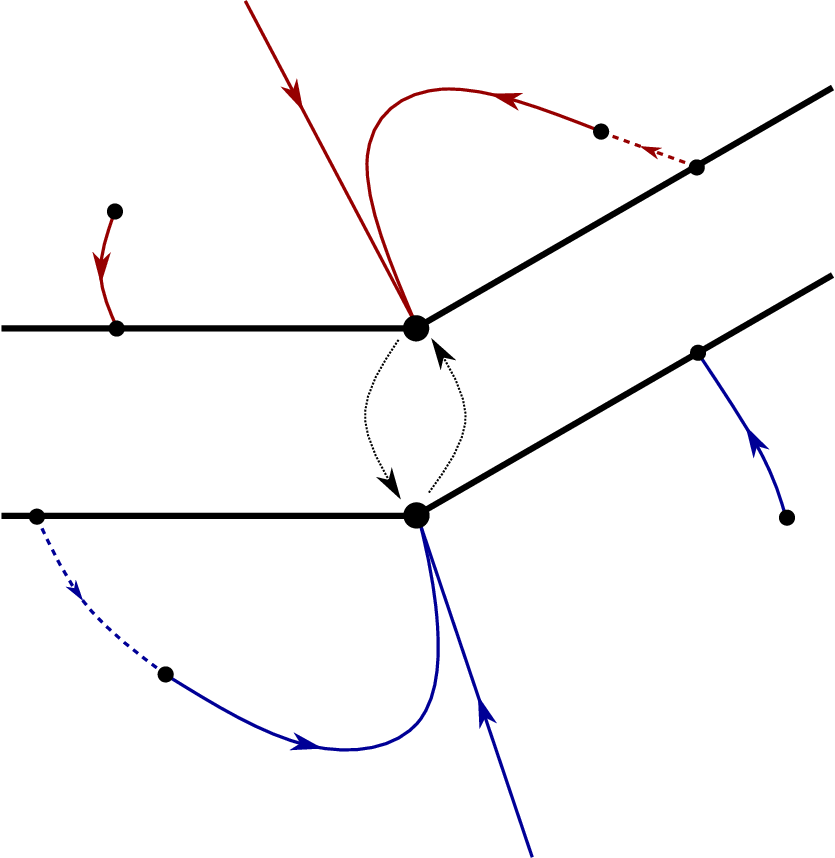} 
			\put(75,95){$S_1^+$}
			\put(10,85){$S_2^+$}
			\put(30.5,98){$E^+$}
			\put(68.5,81){$q$}
			\put(15,75){$q$}
			\put(7,56){$X^+(t^2_q,q)$}
			\put(78,75){$X^+(t^1_q,q)$}
			\put(25,42){$\Sigma_\rho$}
			\put(95,91){$\Sigma_\rho$}
			\put(46,56){$\mathcal{O}$}
			\put(44,35){$\mathcal{O}$}
			\put(55.5,51){$\varphi_\rho$}
			\put(36.5,51){$\varphi_\rho$}
			\put(10,10){$S_1^-$}
			\put(75,25){$S_2^-$}
			\put(63,-1){$E^-$}
		\end{overpic}		
	\end{center}
	\caption{Illustration of two Hurwitz vector fields $X^\pm$ of type $N_2$. The curves in red (resp. blue) are the solutions of $X^+$ (resp. $X^-$). For interpretation of the references to color in this figure legend, the reader is referred to the web version of this article.}\label{Fig4}
\end{figure}
The following lemma follows from~\cite[Lemma~$6$]{LFM1}.

\begin{lemma}\label{L3}
	Let $X^+$ be a Hurwitz vector field of type $N_2$ defined in $\Sigma_\rho^+$. Then the following statements hold (see Figure~\ref{Fig4}).
	\begin{enumerate}[label=(\alph*)]
		\item If $q\in E^+$, then $X^+(t,q)\to\mathcal{O}$ as $t\to+\infty$.
		\item If $q\in S_1^+$, then
		\begin{enumerate}[label=(\roman*)]
			\item $X^+(t,q)\to\mathcal{O}$ as $t\to+\infty$;
			\item there is $t^1_q<0$ such that $X^+(t^1_q,q)\in\Sigma_\rho$.
		\end{enumerate}
		\item If $q\in S_2^+$, then there is $t^2_q>0$ such that $X^+(t^2_q,q)\in\Sigma_\rho$.
	\end{enumerate}
\end{lemma}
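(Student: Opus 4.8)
The plan is to follow the proof of \cite[Lemma~6]{LFM1} essentially line by line; the only point not already covered there is that our switching set is the \emph{broken} line $\Sigma_\rho=\Sigma^1\cup\Sigma^2_\rho$, with a corner at the origin. First I would apply Lemma~\ref{L1}: the matrix $C^+$ constructed there is a linear conjugacy carrying $X^+$ to its normal form $Y^+$ while fixing $\Sigma_\rho$ pointwise, so it maps $\Sigma_\rho^+$ onto itself, the eigenline of $B^+$ onto that of $A^+$, hence $E^+,S_1^+,S_2^+$ onto the corresponding sets for $Y^+$, and it preserves (time for time, and since $C^+(\mathcal{O})=\mathcal{O}$) both the event ``the orbit meets $\Sigma_\rho$'' and convergence to the origin. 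Thus we may assume $X^+(x,y)=A^+(x,y)^{\mathrm{T}}$. Type $N_2$ then forces $(\sigma^+)^2+4\delta^+=0$, so $\delta^+=-(r^+)^2$ with $r^+=\sigma^+/2<0$ and eigenline $\ell^+=\{x=r^+y\}$. Setting $\xi_1=y$, $\xi_2=x-r^+y$ puts the system in Jordan form $\dot\xi_1=r^+\xi_1+\xi_2$, $\dot\xi_2=r^+\xi_2$, whence
\[
	\xi_2(t)=\xi_2(0)\,e^{r^+t},\qquad \xi_1(t)=\bigl(\xi_1(0)+\xi_2(0)\,t\bigr)e^{r^+t}.
\]
Here $E^+,S_1^+,S_2^+$ are exactly $\{\xi_2=0\}$, $\{\xi_2>0\}$, $\{\xi_2<0\}$ intersected with $\Sigma_\rho^+\setminus\Sigma_\rho$; and since $h_\rho>0$ implies $y>0$, every $q\in\Sigma_\rho^+\setminus\Sigma_\rho$ has $\xi_1(0)=y_q>0$.

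Second, I would record two elementary facts replacing the single half-plane computation of \cite{LFM1}. \emph{(i) Crossing directions.} A short computation with $A^+$ and the signs $\sigma^+<0$, $\delta^+<0$, $\rho\geqslant 0$ gives $\langle X^+,\nabla h_\rho\rangle=x<0$ along $\Sigma^1\setminus\{\mathcal{O}\}$ and $\langle X^+,\nabla h_\rho\rangle=(1-\rho\sigma^+-\rho^2\delta^+)\,x>0$ along $\Sigma^2_\rho\setminus\{\mathcal{O}\}$; hence an $X^+$-orbit can leave $\Sigma_\rho^+$ only across $\Sigma^1$ and can enter it only across $\Sigma^2_\rho$. \emph{(ii) One crossing of $\{y=0\}$.} Since $y(t)=\xi_1(t)$ equals $e^{r^+t}$ times an affine function of $t$, the line $\{y=0\}$ is met at most once along an orbit, and exactly at $t^{\ast}=-\xi_1(0)/\xi_2(0)$ when $\xi_2(0)\neq0$.

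Now the four claims follow. For (a): if $q\in E^+$ then $\xi_2\equiv0$ and the orbit coincides with the invariant ray $E^+\subset\Sigma_\rho^+\setminus\Sigma_\rho$, which tends to $\mathcal{O}$. For (c): if $q\in S_2^+$ then $\xi_1(0)>0$, $\xi_2(0)<0$, so $t^{\ast}>0$, and at $t^{\ast}$ we have $y=0$ and $x=\xi_2(0)e^{r^+t^{\ast}}<0$, i.e. $X^+(t^{\ast},q)\in\Sigma^1\subset\Sigma_\rho$; take $t^2_q=t^{\ast}$. For (b)(ii): if $q\in S_1^+$ then $\xi_2(0)>0$, and as $t\to-\infty$ one has $\xi_1(t)/\xi_2(t)=\xi_1(0)/\xi_2(0)+t\to-\infty$, so the orbit escapes to infinity asymptotic to the ray $\{\xi_1<0,\ \xi_2=0\}$, that is, to the part of $\ell^+$ in $\{y<0\}$; a one-line sign check shows that ray lies in the open set $\Sigma_\rho^-$, so the orbit, which starts in $\Sigma_\rho^+$, must cross $\Sigma_\rho$ at some $t^1_q<0$. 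Finally (b)(i): by (i) the forward orbit could leave $\Sigma_\rho^+$ only across $\Sigma^1$, i.e. only at a time with $y(t)=0$ and $x(t)<0$; but by (ii) the unique zero of $y$ is $t^{\ast}<0$ (since now $\xi_1(0),\xi_2(0)>0$), so the forward orbit never leaves $\Sigma_\rho^+$, and there it converges to $\mathcal{O}$ because $X^+$ is Hurwitz.

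The step I expect to be the real obstacle is exactly (b)(i) and, more broadly, keeping track of which side of the \emph{broken} line the orbit lies on: with a corner at $\mathcal{O}$ one cannot argue on a single half-plane as in \cite{LFM1}, and one must combine the two crossing directions in (i), the single-crossing bound in (ii), and the sign $\xi_1=y>0$ on $\Sigma_\rho^+\setminus\Sigma_\rho$ to exclude any excursion of the forward orbit out of and back into $\Sigma_\rho^+$. Everything else is a faithful transcription of \cite[Lemma~6]{LFM1}.
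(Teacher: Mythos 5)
Your proposal is correct, and it is genuinely more than what the paper does at this point: the paper's ``proof'' of Lemma~\ref{L3} is a one-line citation to \cite[Lemma~$6$]{LFM1}, which is proved there for a \emph{straight-line} switching set, while you supply a complete direct argument and, crucially, handle the corner of the broken line $\Sigma_\rho=\Sigma^1\cup\Sigma^2_\rho$ explicitly. Your Jordan-coordinate computation ($\xi_1=y$, $\xi_2=x-r^+y$), the two crossing-sign computations ($\langle X^+,\nabla h_\rho\rangle=x<0$ on $\Sigma^1\setminus\{\mathcal{O}\}$ and $(1-\rho\sigma^+-\rho^2\delta^+)x>0$ on $\Sigma^2_\rho\setminus\{\mathcal{O}\}$), and the single-zero property of $y(t)$ are all correct, and together they close exactly the gap you identify in item (b)(i): forward exit from $\Sigma_\rho^+$ could only occur across $\Sigma^1$, which would force a forward-time zero of $y$, impossible when $\xi_1(0),\xi_2(0)>0$; this is the adaptation the paper's citation glosses over. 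Two small remarks. First, your opening reduction via Lemma~\ref{L1} is unnecessary: since the sets $E^+,S_1^+,S_2^+$ are defined through $r^+=\sigma^+/2$ and the line $x=r^+y$, the lemma is already stated for the normal form $A^+$ (the paper says explicitly that the results of that section are stated for $A^+$), so you may simply start from $X^+(x,y)=A^+(x,y)^{\mathrm{T}}$. Second, the property you import from Lemma~\ref{L1} --- that $C^+$ fixes $\Sigma_\rho$ \emph{pointwise} --- should not be leaned on for $\rho>0$: a linear map fixing two linearly independent directions pointwise would be the identity, and the matrices $C^\pm$ constructed there fix the ray $\Sigma^2_\rho$ but not, in general, the ray $\Sigma^1$; fortunately nothing in the rest of your argument depends on this, since everything after the (superfluous) reduction is self-contained.
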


\subsection{Hurwitz vector fields of type \boldmath{$F$}}

Suppose that the origin is an attracting focus of $X^+$ with eigenvalues given by
	\[r_1^+=\lambda^++i\mu_+, \quad r_2^+=\lambda^+-i\mu_+,\]
where
	\[\lambda^+=\frac{\sigma^+}{2}<0, \quad \mu^+=\frac{\sqrt{|(\sigma^+)^2+4\delta^+|}}{2}>0,\]
and $i$ is the imaginary unit satisfying $i^2=-1$. The following lemma follows from~\cite[Lemma~$7$]{LFM1}.

\begin{lemma}\label{L4}
	Let $X^+$ be a Hurwitz vector field of type $F$ defined in $\Sigma_\rho^+$. Then for every $q\in\Sigma_\rho^+\setminus\Sigma_\rho$ there are $t^1_q<0<t^2_q$ such that $X^+(t^1_q,q)\in\Sigma_\rho$ and $X^+(t^2_q,q)\in\Sigma_\rho$ (see Figure~\ref{Fig5}).
\end{lemma}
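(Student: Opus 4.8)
The plan is to argue in polar coordinates, exploiting the classical fact that a planar linear vector field with non-real eigenvalues has nowhere-vanishing angular velocity, so that each of its non-constant orbits winds around the origin $\mathcal{O}$ without bound in both time directions. Since $\Sigma_\rho$ is the union of the ray $\Sigma^1$, lying at angle $\pi$, and the ray $\Sigma^2_\rho$, lying at angle $\alpha:=\arctan\rho\in[0,\pi/2)$, while $\Sigma_\rho^+\setminus\Sigma_\rho$ is precisely the open angular sector of directions in $(\alpha,\pi)$, any such spiralling orbit starting in this sector must leave it through one of the two bounding rays in finite positive time and through the other in finite negative time.

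First I would observe that, $X^+$ being linear with $\mathcal{O}$ as its unique singularity and $q\neq\mathcal{O}$, the solution $t\mapsto X^+(t,q)$ is defined for all $t\in\mathbb{R}$ and never reaches $\mathcal{O}$; hence it can be written as $X^+(t,q)=r(t)\bigl(\cos\theta(t),\sin\theta(t)\bigr)$ with $r(t)>0$ and $\theta$ a $C^1$ lift of the argument. A direct computation gives $\dot\theta=\bigl(x\dot y-y\dot x\bigr)/(x^2+y^2)$ along the orbit; since $(\dot x,\dot y)=A^+(x,y)$, the numerator $x\dot y-y\dot x$ is a quadratic form in $(x,y)$ which vanishes at $(x,y)\neq(0,0)$ exactly when $(x,y)$ spans a real invariant line of $A^+$. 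As $X^+$ is of type $F$ its eigenvalues are non-real, so there is no such line; thus $\dot\theta$ never vanishes, and being homogeneous of degree $0$ it depends only on $\theta$ and is bounded away from $0$ by a positive constant. Reversing time if necessary — which only interchanges the roles of $t^1_q$ and $t^2_q$ — we may assume $\dot\theta>0$, so that $\theta(t)\to+\infty$ as $t\to+\infty$ and $\theta(t)\to-\infty$ as $t\to-\infty$.

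Next I would translate membership in $\Sigma_\rho$ into the angular coordinate: a point $p\neq\mathcal{O}$ lies on $\Sigma^1=\{(x,y):x\le0,\,y=0\}$ iff its argument is $\equiv\pi\pmod{2\pi}$, and it lies on $\Sigma^2_\rho=\{(x,y):x\ge0,\,y=\rho x\}$ iff its argument is $\equiv\alpha\pmod{2\pi}$ (here the sign condition $x\ge0$ is automatic because $\alpha\in[0,\pi/2)$). Since $q\in\Sigma_\rho^+\setminus\Sigma_\rho=\{x\le0,\,y>0\}\cup\{x>0,\,y>\rho x\}$, the lift may be chosen so that $\theta(0)\in(\alpha,\pi)$. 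Then monotonicity of $\theta$, together with $\theta(t)\to+\infty$ and the intermediate value theorem, yields some $t^2_q>0$ with $\theta(t^2_q)=\pi$, whence $X^+(t^2_q,q)\in\Sigma^1\subset\Sigma_\rho$; likewise $\theta(t)\to-\infty$ yields some $t^1_q<0$ with $\theta(t^1_q)=\alpha$, whence $X^+(t^1_q,q)\in\Sigma^2_\rho\subset\Sigma_\rho$. This is exactly the claim.

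I do not foresee a real obstacle: the only substantive inputs are the non-vanishing of the angular velocity (equivalently, the absence of an invariant line for a focus) and the elementary bookkeeping of which ray of the broken line $\Sigma_\rho$ is crossed forward versus backward in time. Alternatively, one could first reduce $X^+$ to the normal form $Y^+$ of Lemma~\ref{L1}, whose conjugating change of variables fixes $\Sigma_\rho$ pointwise, and then invoke the analogous statement \cite[Lemma~7]{LFM1}; but the polar-coordinate argument above is self-contained and uniform in $\rho$.
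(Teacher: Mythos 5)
Your argument is correct. The nonvanishing of the angular velocity for a linear field with non-real eigenvalues, its homogeneity of degree zero (hence a uniform positive lower bound on $|\dot\theta|$ over the unit circle), and the identification of $\Sigma_\rho^+\setminus\Sigma_\rho$ with the open angular sector $(\arctan\rho,\pi)$ together give exactly the two crossing times via the intermediate value theorem; the bookkeeping for the two possible senses of rotation is handled correctly, and since $\theta$ is monotone up to the first crossing the orbit stays in $\Sigma_\rho^+$ throughout, so there is no issue with $X^+$ being prescribed only there.

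The paper itself gives no argument for this lemma: it simply states that it follows from Lemma~7 of \cite{LFM1}, which is the analogous crossing statement proved in the straight-line setting ($\rho=0$) of that earlier work. Your polar-coordinate proof is therefore a genuinely self-contained alternative, and it has the mild additional merit of being uniform in $\rho\geqslant0$, i.e.\ it handles the polygonal switching set $\Sigma_\rho=\Sigma^1\cup\Sigma^2_\rho$ directly rather than by adapting a result stated for a single straight line; it also makes explicit that the crossings found are the first ones in each time direction, which is slightly more than the lemma demands. The underlying mechanism — a linear focus spirals around the origin and so must meet every ray emanating from it in both time directions — is of course the same as in the cited source, so conceptually the two routes coincide; the difference is only that you supply the argument (and its extension to the broken line) where the paper defers to a citation.
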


\begin{figure}[ht]
	\begin{center}
		\begin{overpic}[width=8cm]{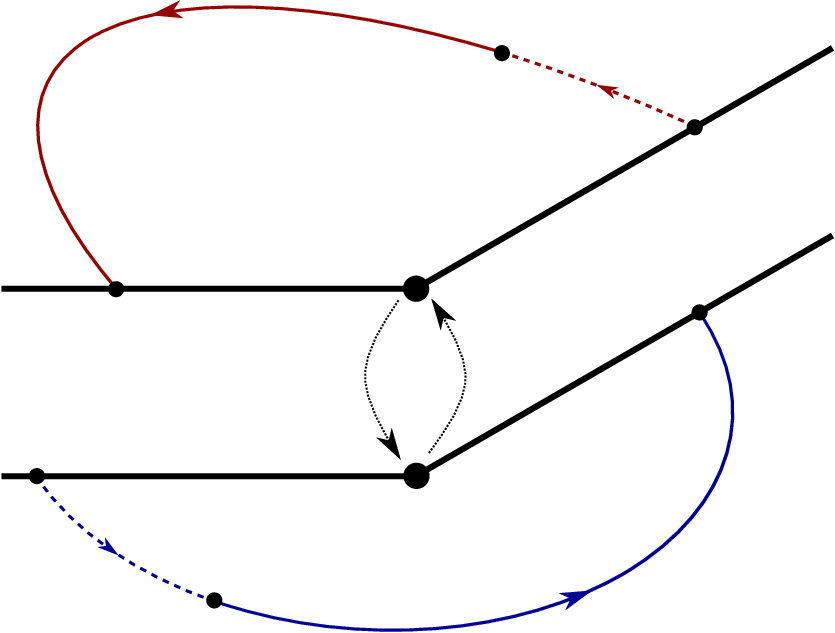} 
			\put(58,65.5){$q$}
			\put(7,35){$X^+(t^2_q,q)$}
			\put(79,54){$X^+(t^1_q,q)$}
			\put(25,21){$\Sigma_\rho$}
			\put(97,71){$\Sigma_\rho$}
			\put(50,14){$\mathcal{O}$}
			\put(47,35){$\mathcal{O}$}
			\put(57,30){$\varphi_\rho$}
			\put(37.5,30){$\varphi_\rho$}
		\end{overpic}		
	\end{center}
	\caption{Illustration of two Hurwitz vector fields $X^\pm$ of type $F$. The curves in red (resp. blue) are the solutions of $X^+$ (resp. $X^-$). For interpretation of the references to color in this figure legend, the reader is referred to the web version of this article.}\label{Fig5}
\end{figure}

To end this section, we observe that since $X^+$ is linear we can compute explicitly its solution, see~\cite[Chapter~$1$]{Perko}. Therefore it is not hard to see that if $X^+$ is a Hurwitz vector field of type $F$ with the normal form given by Lemma~\ref{L1}, then its solution 
	\[X^+(t;x,y)=\bigl(x^+(t;x,y),y^+(t;x,y)\bigr)\]
is given by
\begin{equation}\label{10}
	\begin{array}{l}
		\displaystyle x^+(t;x,y)=e^{\lambda^+t}\left(x\cos(\mu^+t)+\frac{1}{\mu^+}(\lambda^+x-r_1^+r_2^+y)\sin(\mu^+t)\right), \vspace{0.2cm} \\
		\displaystyle y^+(t;x,y)=e^{\lambda^+t}\left(y\cos(\mu^+t)+\frac{1}{\mu^+}(x-\lambda^+y)\sin(\mu^+t)\right).
	\end{array}
\end{equation}

\section{Proof of Theorem~\ref{Main1}}\label{Sec5}

\begin{proof}[Proof of Theorem~\ref{Main1}]
	It follows from Lemmas~\ref{L2}, \ref{L3} and \ref{L4} that if $X^+$ or $X^-$ is not of type $F$, then $\mathfrak{X}$ is GAS. More precisely, if we suppose for example that $X^+$ is of type $N_1$ and $X^-$ is of type $N_2$ (i.e. $N_1-N_2$), then it is clear that every orbit of $\mathfrak{X}$ will eventually go to the origin. See Figures~\ref{Fig3} (in red) and~\ref{Fig4} (in blue). The cases $N_1-N_1$, $N_1-F$, $N_2-N_2$, $N_2-F$ and their symmetric (e.g. $F-N_1$) follow similarly. This proves statement $(a)$.
	
	We now focus on statement $(b)$. In particular, from now on we suppose that both $X^\pm$ are of type $F$. Consider the displacement map $\Delta\colon\mathbb{R}_{>0}\to\mathbb{R}$ given by
	\begin{equation}\label{11}
		\Delta(x)=x^-\bigl(-t^-;\varphi^{-1}_\rho(x,\rho x)\bigr)-\varphi_\rho\bigl(x^+(t^+;x,\rho x)\bigr),
	\end{equation}
	where $t^\pm>0$ are such that
	\begin{equation}\label{12}
		y^+(t^+;x,\rho x)=0, \quad y^-\bigl(-t^-;\varphi^{-1}_\rho(x,\rho x)\bigr)=0.
	\end{equation}
	See Figure~\ref{Fig6}.
	\begin{figure}[ht]
		\begin{center}
			\begin{overpic}[width=8cm]{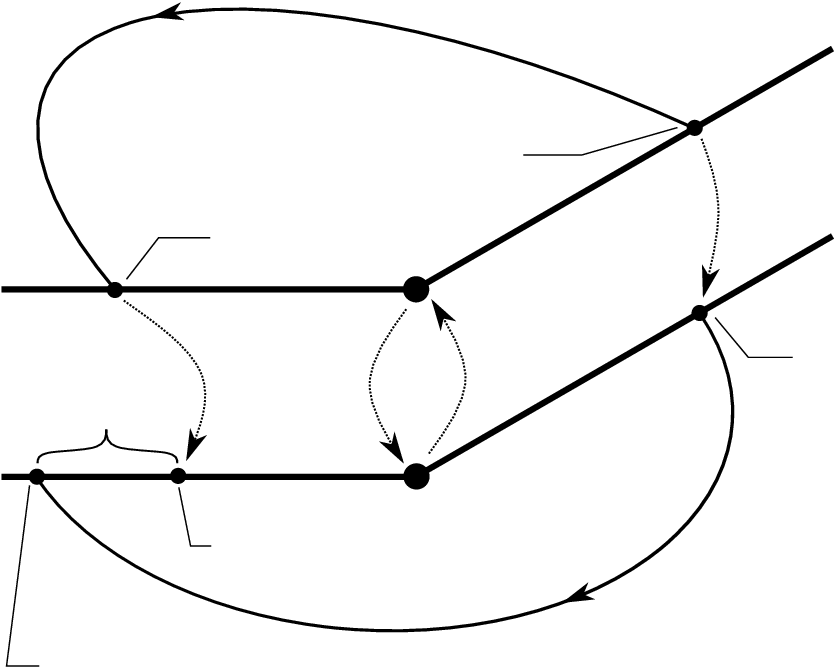} 
				\put(30,25){$\Sigma_\rho$}
				\put(98,75.5){$\Sigma_\rho$}
				\put(51.5,20){$\mathcal{O}$}
				\put(48,39){$\mathcal{O}$}
				\put(57,34){$\varphi_\rho$}
				\put(38,34){$\varphi_\rho$}
				\put(25.5,33){$\varphi_\rho$}
				\put(87,54){$\varphi^{-1}_\rho$}
				\put(48.5,60.25){$(x,\rho x)$}
				\put(95.5,36.5){$\varphi_\rho^{-1}(x,\rho x)$}
				\put(5.5,-0.8){$x^-\bigl(-t^-;\varphi^{-1}_\rho(x,\rho x)\bigr)$}
				\put(25.5,50.5){$x^+(t^+;x,\rho x)$}
				\put(26,14){$\varphi_\rho\bigl(x^+(t^+;x,\rho x)\bigr)$}
				\put(3,30){$\Delta(x)<0$}
				\put(10,71){$X^+$}
				\put(75,9){$-X^-$}
			\end{overpic}		
		\end{center}
		\caption{Illustration of the displacement map $\Delta$.}\label{Fig6}
	\end{figure}

	From \eqref{6} and \eqref{10} we have
	\begin{equation}\label{13}
		\varphi_\rho\bigl(x^+(t^+;x,\rho x)\bigr)=-ae^{\lambda^+t^+r}|\cos(\mu^+t^+)+\Phi^+\sin(\mu^+t^+)|^rx^r,		
	\end{equation}
	where $\Phi^\pm=(\lambda^\pm-r_1^\pm r_2^\pm\rho)/\mu^\pm$. On the other hand, from \eqref{7} and \eqref{10} we obtain
	\begin{equation}\label{14}
		x^-\bigl(-t^-;\varphi^{-1}_\rho(x,\rho x)\bigr)=e^{-\lambda^-t^-}\bigl(\cos(\mu^-t^-)-\Phi^-\sin(\mu^-t^-)\bigr)b^{-\frac{1}{s}}x^\frac{1}{s}.
	\end{equation}
	Therefore it follows from \eqref{11}, \eqref{13} and \eqref{14} that $\Delta(x)<0$ if and only if
	\begin{equation}\label{15}
		ab^\frac{1}{s}x^r<e^{|\lambda^-t^-+\lambda^+t^+r|}\frac{|\cos(\mu^-t^-)-\Phi^-\sin(\mu^-t^-)|}{|\cos(\mu^+t^+)+\Phi^+\sin(\mu^+t^+)|^r}x^\frac{1}{s}.
	\end{equation}
	From \eqref{15} we can deduce statements $(i)$, $(ii)$, $(iii)$ and $(iv)$. More precisely, if $r=1/s$ then we can factor $x$ out of \eqref{15} and thus obtain that the origin is globally asymptotically stable (resp. globally asymptotically unstable) if \eqref{15} is satisfied (resp. is the opposite inequality is satisfied). If the equality holds, then the origin is a global center.
	
	If $r\neq1/s$ then we can divide \eqref{15} either by $x^r$ or $x^\frac{1}{s}$ (whichever is smaller) and thus obtain that there is exactly one $x_0>0$ satisfying the equality in~\eqref{15}. This $x_0$ represents a limit cycle. Differentiating $\Delta(x)$ in this case one can see that $\Delta'(x_0)\neq0$ and thus the limit cycle is hyperbolic. This finishes the proof of statement $(b)$.
\end{proof}

\begin{remark}[The case $\rho=0$]
	Solving \eqref{12} we obtain,
	\begin{equation}\label{16}
		t^\pm(\rho)=\frac{1}{\mu^\pm}\left(\pm\arctan\left(\frac{\mu^\pm}{\lambda^\pm\rho-1}\rho\right)+\pi\right).
	\end{equation}
	In particular we have $t^\pm(0)=\pi/\mu^\pm$ and thus if we replace $\rho=0$ at \eqref{15} we obtain,
		\[ab^\frac{1}{s}x^r<e^{\bigl|\frac{\lambda^-}{\mu^-}+\frac{\lambda^+}{\mu^+}r\bigr|\pi}x^\frac{1}{s}.\]
	Observe that if $a=b=r=s=1$, then the above inequality is satisfied regardless of $\lambda^\pm$ and $\mu^\pm$, i.e. regardless of $X^\pm$. This in addition with Theorem~\ref{Main1}$(a)$ ensures that if the jump $\varphi_\rho$ is reduced to the identity map (i.e. if the hybrid system $\mathfrak{X}$ is reduced to a piecewise linear vector field) and $\Sigma_\rho$ is reduced to a straight line, then $\mathfrak{X}$ is GAS. This is precisely the main result of \cite{LFM1}.
\end{remark}

\begin{remark}[The limit case $\rho\to+\infty$]
	From \eqref{16} we have,	
		\[\lim\limits_{\rho\to+\infty}t^\pm(\rho)=\frac{1}{\mu^\pm}\left(\pm\arctan\left(\frac{\mu^\pm}{\lambda^\pm}\right)+\pi\right).\]
	This in addition with the following well-known properties of the trigonometric functions
	\[\begin{array}{ll}
		\displaystyle \cos(x+\pi)=-\cos x, &\displaystyle \sin(x+\pi)=-\sin(x), \vspace{0.2cm} \\
		\displaystyle \cos(\arctan x)=\frac{1}{\sqrt{1+x^2}}, &\displaystyle \sin(\arctan x)=\frac{x}{\sqrt{1+x^2}},
	\end{array}\]
	and some straightforward calculations, allow us to conclude that
	\begin{equation}\label{17}
		\lim\limits_{\rho\to+\infty}\frac{|\cos(\mu^-t^-)-\Phi^-\sin(\mu^-t^-)|}{|\cos(\mu^+t^+)+\Phi^+\sin(\mu^+t^+)|^r}=\left\{\begin{array}{ll} 0, & \text{if } r>1, \vspace{0.2cm} \\ +\infty, &\text{if } r<1, \vspace{0.2cm} \\ \displaystyle \sqrt{\frac{(\lambda^-)^2+(\mu^-)^2}{(\lambda^+)^2+(\mu^+)^2}}, &\text{if } r=1. \end{array}\right.
	\end{equation}
	Since the above limit is the only factor of \eqref{15} that depends on $\rho$, we conclude that if $r\neq1$ then the right-hand side of \eqref{15} can either explode or collapse to zero as $\rho\to+\infty$. In particular, if we assume that $1/s=r$ (and thus factoring $x$ out of \eqref{15}), then it follows from \eqref{17} that the stability of the origin can reverse as $\rho\to+\infty$. 
\end{remark}

We finish this section with an example that shows that even if $\Sigma_\rho$ a straight line (i.e. $\rho=0$) and $X^\pm$ has a continuous match at $\Sigma$, the hybrid system $\mathfrak{X}$ can have a limit cycle. We observe that the existence of such a limit cycle was proved impossible in the previous piecewise framework~\cites{LFM1,Frei1998}.

\begin{example}
	Consider the hybrid system $\mathfrak{X}=(X^+,X^-;\Sigma;\varphi)$ obtained by replacing 
	\begin{equation}\label{18}
		a=b=1, \quad r=s=3, \quad \rho=0, \quad \lambda^\pm=-1, \quad \mu^\pm=1,
	\end{equation}
	in the proof of Theorem~\ref{Main1}. Using the global normal form provided by Lemma~\ref{L1}, we have that $\mathfrak{X}$ is given by
		\[\begin{array}{l}
			X^\pm(x,y)=(-2x-2y,x), \vspace{0.2cm} \\
			\Sigma=\{(x,y)\in\mathbb{R}^2\colon y=0\}, \vspace{0.2cm} \\
			\varphi(x,0)=(x^3,0).
		\end{array}\]
	Replacing \eqref{18} into \eqref{13}, \eqref{14} and \eqref{16} we obtain
	\begin{equation}\label{19}
		\Delta(x)=-e^\pi x^\frac{1}{3}+e^{-3\pi}x^3,
	\end{equation}
	where we recall that $\Delta\colon\mathbb{R}_{>0}\to\mathbb{R}$ is the displacement map given by~\eqref{11}. Solving \eqref{19} we obtain that $x_0=e^{\frac{3}{2}\pi}$ is the unique solution of $\Delta(x)=0$. Hence, $\mathfrak{X}$ has a unique limit cycle $\gamma$. Differentiating \eqref{19} we obtain
		\[\Delta'(x)=-\frac{1}{3}e^\pi x^{-\frac{2}{3}}+3e^{-3\pi}x^2.\]
	Since $\Delta'(x_0)=8/3$, we have that $\gamma$ is hyperbolic and unstable. In particular, from the uniqueness of $\gamma$ we have that the origin is locally asymptotically stable. See Figure~\ref{Fig7}.
	\begin{figure}[ht]
		\begin{center}
			\begin{overpic}[width=8cm]{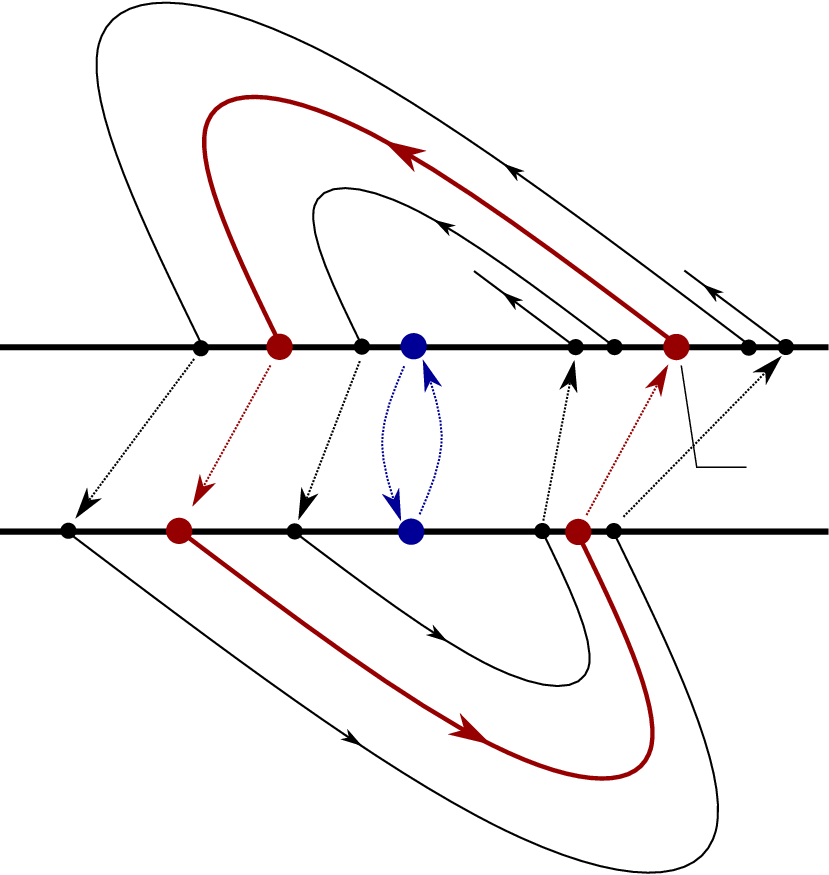} 
				\put(45,62.5){$\mathcal{O}$}
				\put(45,34){$\mathcal{O}$}
				\put(60,9){$\gamma$}
				\put(25,90.5){$\gamma$}
				\put(0,40){$\Sigma$}
				\put(0,61){$\Sigma$}
				\put(28,50){$\varphi$}
				\put(68.5,50){$\varphi$}
				\put(86,45.5){$(x_0,0)$}
			\end{overpic}		
		\end{center}
		\caption{Illustration of $\mathfrak{X}$. Here blue denotes stable and red denotes unstable. For interpretation of the references to color in this figure legend, the reader is referred to the web version of this article.}\label{Fig7}
	\end{figure}
\end{example}

\section{Conclusion and further thoughts}\label{Sec6}

The main inspiration for this work is the sequence of papers~\cites{LFM1,LFM2,LFM3} in which is studied the crossing matching of different types of globally asymptotically stable vector fields. Such matching sometimes result in a globally asymptotically stable piecewise vector field, but not always.

By embedding these piecewise vector fields in a hybrid structure we were able to connect such results in a unique formulae that encapsulates the global dynamics of this new hybrid system. In particular, such formulae shows precisely when the origin is GAS and also indicates the bifurcation of a limit cycle that was previously impossible in the piecewise framework.

\section*{Acknowledgments}

The first author is partially supported by Funda\c c\~ao de Amparo \`a Pesquisa do Estado de Minas Gerais, Brazil, grant numbers APQ-02153-23, APQ-05207-23 and RED-00133-21.

The second author is partially supported by S\~ao Paulo Research Foundation (FAPESP), Brazil, grant 2021/01799-9. 

The second author is grateful for the hospitality of Universidade Federal de Itajub\'{a} (UNIFEI), where this study was developed.

\end{document}